\theoremstyle{plain}
\newtheorem{definition}{Definition}[section]
\newtheorem{theorem}[definition]{Theorem}
\newtheorem*{theorem*}{Theorem}
\newtheorem*{remark*}{Remark}
\newtheorem*{sideremark*}{Side Remark}\newtheorem*{mt*}{Main Theorem}
\newtheorem*{claim*}{Claim}
\newtheorem*{q*}{Question}
\newtheorem{lemma}[definition]{Lemma}
\newtheorem*{corollary*}{Corollary}
\newtheorem*{proposition*}{Proposition}
\newcommand{\R}{\mathbb{R}}
\newcommand{\C}{\mathbb{C}}
\newcommand{\na}{\nabla}
\newcommand{\dd}{{\rm d}}
\newcommand{\p}{\partial}
\newcommand{\e}{\epsilon}
\newcommand{\emb}{\hookrightarrow}
\newcommand{\map}{\rightarrow}
\newcommand{\G}{\Gamma}
\newcommand{\M}{\mathcal{M}}
\newcommand{\1}{\mathbbm{1}}
\newcommand{\F}{\mathcal{F}}
\newcommand{\two}{{\rm II}}
\newcommand{\n}{\mathbf{n}}
\newcommand{\weakstar}{\buildrel\ast\over\rightharpoonup}
\newcommand{\sd}{\mathbb{S}^d}
\newcommand{\te}{\Theta^\e}
\def\Xint#1{\mathchoice
{\XXint\displaystyle\textstyle{#1}}%
{\XXint\textstyle\scriptstyle{#1}}%
{\XXint\scriptstyle\scriptscriptstyle{#1}}%
{\XXint\scriptscriptstyle\scriptscriptstyle{#1}}%
\!\int}
\def\XXint#1#2#3{{\setbox0=\hbox{$#1{#2#3}{\int}$ }
\vcenter{\hbox{$#2#3$ }}\kern-.6\wd0}}
\def\dashint{\Xint-}
\numberwithin{equation}{section}
\numberwithin{figure}{section}
\title{A Remark on the Non-Compactness of $W^{2,d}$-Immersions of $d$-Dimensional Hypersurfaces}
\author{Siran Li}
\address{Siran Li: Department of Mathematics, Rice University, MS 136
P.O. Box 1892, Houston, Texas, 77251-1892, USA \, $\bullet$ \,  Department of Mathematics, McGill University, Burnside Hall, 805 Sherbrooke Street West, Montreal, Quebec, H3A 0B9, Canada}
\email{\texttt{Siran.Li@rice.edu}}
\subjclass[2010]{58D10}
\date{\today}
\keywords{Immersions; Hypersurface; Chord-Arc Surface; Second Fundamental Form; Gauss Map; Compactness; Bounded Mean Oscillations (BMO); Finiteness Theorems; Riemannian Geometry}
\begin{document}

\maketitle

\begin{abstract}
We consider the continuous $W^{2,d}$-immersions of $d$-dimensional hypersurfaces in $\R^{d+1}$ with second fundamental forms uniformly bounded in $L^d$. Two results are obtained: first, we construct a family of such immersions whose limit fails to be an immersion of a manifold. This addresses the endpoint cases in J. Langer \cite{la} and P. Breuning \cite{br}. Second, under the additional assumption that the Gauss map is slowly oscillating, we prove that any family of such immersions subsequentially converges to a set locally parametrised by H\"{o}lder functions.

\end{abstract}

\section{Introduction}
	
	In \cite{la}, motivated by J. Cheeger's finiteness theorems (\cite{ch}, also see K. Corlette \cite{co}) and the Willmore energy of surfaces (see \textit{e.g.} T. Rivi\`{e}re \cite{ri}), J. Langer proved the following result:

	{\em Let $A,E$ are given finite numbers and $p>2$. Denote by $\F(A,E,p)$ the moduli space of immersed surfaces $\psi: \M \map \R^3$ with ${\rm Area}(\psi) \leq A$, $\|\two\|_{L^p(\M)} \leq E$ and $\int_\M \psi\,\dd V =0$. Then any sequence $\{\psi_j\}\subset\F(A,E,p)$ contains a subsequence converging in $C^1$ to an immersed surface modulo  ${\rm Diff}(\M)$, the group of diffeomorphisms of $\M$}.
	
	Here and hereafter, the immersed submanifold $\psi: \M \emb \R^n$ is equipped with the pullback metric from the Euclidean metric on $\R^n$. We denote by $\two$ the second fundamental form of $\psi$, and we write $\dd V$ for the volume form on $\M$. 
	
	 In a recent paper \cite{br}, P. Breuning generalised J. Langer's result to arbitrary dimensions and co-dimensions:
	 
	 {\em Let $A,E$ be given finite numbers and $p>n$, $d>n$. Denote by $\F(V,E,d,n)$ the moduli space of immersions $\psi: \M \map \R^n$ where $\M$ is a $d$-dimensional closed manifold, ${\rm Vol}(\M)\leq A$, $\|\two\|_{L^p(\M)} \leq E$ and $\psi(\M)$ contains a fixed point.  Then any sequence $\{\psi_j\} \subset \F(V,E,d,n)$ contains a subsequence converging in $C^1$ to an immersed submanifold modulo ${\rm Diff}(\M)$.}
	
	The above two compactness theorems on the moduli space of immersions have a crucial assumption: $p > \dim (\M) =d$. Indeed, the proofs in \cite{la, br} utilise the Sobolev--Morrey embedding $W^{1,p}(\R^n)\emb C^{0,\alpha}(\R^n)$, where $p>n$ and $\alpha=\alpha(p,n) \in ]0,1[$. It is natural to ask about the end-point case $p=d$, for which the Sobolev--Morrey embedding fails. In the case $p=d=2$, J. Langer (p.227, \cite{la}) constructed a counterexample using conformal geometry --- the M\"{o}bius inversions of the Clifford torus $T_{\rm cl}$ with respect to a sequence of points $x_j\notin T_{\rm cl}$ approaching an outermost point (with distance measured from the centre of the embedded image of $T_{\rm cl}$)  on $T_{\rm cl}$ cannot tend to any immersed manifold. It crucially relies on the structure of $\C$. 
	
	Our first goal of this paper is to construct a counterexample for $p=d$ in arbitrary dimensions. The idea is to construct a family of  hypersurfaces ``spiralling wildly'', as a vague reminiscence of the motion of vortex sheets in fluid dynamics. This is achieved by letting the Gauss map $\n$ ({\it i.e.}, the outward unit normal vectorfield) take $N \gg 1$ turns in one direction as we approach some fixed point $O$ and,  symmetrically, take $N$ turns in the opposite direction as we leave $O$. To illustrate the geometric picture, we first discuss the case $d=1$, and then construct a counterexample for general $d$. Instead of using conformal geometric methods, we exploit the scaling invariance of $\|\two\|_{L^d(\M)}$, which holds in arbitrary dimensions. This is the content of $\S 2$.

Our second goal is to establish an affirmative compactness result for the $p=d$ case, with the help of an additional hypothesis: the $BMO$-norm of the Gauss map $\n$,
\begin{equation}
\|\n\|_{{\rm BMO}(\M)} := \sup_{x \in \M, \,R>0} \dashint_{\M \cap B(x,R)} |\n(y) - \n_{x,R}|\,\dd V(y),
\end{equation} 
is small. Throughout $B(x,R)$ denotes the geodesic ball of radius $R$ and centre $x$ in $\M$, $\dashint$ is the averaged integral, and $\n_{x,R}:=\dashint_{B(x,R)}\n\,\dd V$. This is inspired by the works \cite{se1, se2, se3} due to S. Semmes on {\em chord-arc surfaces with small constants}. In $\S 3$ we use the results in \cite{se1, se2, se3} to prove a ``partial regularity'' theorem for the weak limit: given any family of immersed hypersurfaces $\R^d$ in the $(d+1)$-dimensional Euclidean space with uniformly $L^d$-bounded second fundamental forms and small $\|\n\|_{{\rm BMO}(\M)}$, one may extract a subsequence whose limit can be locally parametrised by H\"{o}lder functions.

The paper is concluded by several further remarks in $\S 4$.

\section{A counter-example to the endpoint case $p=d$}

Let us first study the toy model $d=1$. We prove the following simple result:
\begin{lemma}\label{lemma: d=1}
There exist a family of smooth  curves $\{\M^\e\}$ each homeomorphic to $\R^1$, and a family of immersions $\psi^\e : \M^\e \map \R^2$ as planar curves, such that the extrinsic curvatures $\{\two^\e\}$ associated to $\{\psi^\e\}$ are uniformly bounded in $L^1$, but $\{\psi^\e \circ \sigma^\e\}$ does not converge in $C^1$-topology to any immersion of $\R$ for arbitrary $\{\sigma^\e\} \subset {\rm Diff}(\R)$. 
\end{lemma}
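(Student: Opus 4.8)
The plan is to realise, for $d=1$, the ``wildly spiralling'' picture from the Introduction: a smooth curve that winds around a fixed point $O$ inside a ball of radius $\e$, so that its curvature concentrates entirely at $O$ as $\e\to 0$ while its total curvature stays bounded. First, fix once and for all a \emph{single} smooth reference curve $S\subset\R^2$ that is straight near both of its ends and in between spirals (e.g.\ a logarithmic spiral arc with smoothly attached straight ends, turning monotonically through a total angle of $\tfrac{5\pi}{2}$), contained in $B(O,1)$; then $S$ has finite total curvature $\int_S|\two_S|=:C_0$, its unit tangent sweeps out all of $\mathbb{S}^1$ along the spiralling part, and its two terminal directions differ, $v_0\neq v_1$ (here they differ by a quarter turn). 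For $\e\in\,]0,1[$ let $\M^\e$ be the smooth curve obtained from the dilation $\e\cdot S$ (about $O$) by replacing its two tiny terminal segments with full collinear infinite rays, and let $\psi^\e:\M^\e\emb\R^2$ be the inclusion. Each $\M^\e$ is smooth and homeomorphic to $\R$, the spiralling arc lies in $B(O,\e)$, and -- this is exactly where scaling invariance enters, since $\two_{\e S}=\e^{-1}\two_S$ while arclength scales by $\e$ -- one gets $\|\two^\e\|_{L^1(\M^\e)}=C_0$ for \emph{every} $\e$. Moreover $\psi^\e(\M^\e)$ converges, in the Hausdorff sense on compact sets, to the union of the two rays issuing from $O$ in directions $-v_0$ and $v_1$, i.e.\ to an $L$-shaped set with a genuine corner at $O$: this is the sense in which the limit ``fails to be an immersion of a manifold''.

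It then remains to show that no reparametrisation can rescue $C^1$-convergence. Suppose, towards a contradiction, that along a subsequence $c^\e:=\psi^\e\circ\sigma^\e\to\gamma$ in $C^1$ with $\gamma$ an immersion of $\R$, so that $\gamma'$ is continuous and $|\gamma'|\geq m>0$ on every compact set. Let $J^\e\subset\M^\e$ be the spiralling arc (the dilated spiral, lying in $B(O,\e)$) and $I^\e:=(\sigma^\e)^{-1}(J^\e)\subset\R$, an interval, so that $c^\e(I^\e)=\psi^\e(J^\e)\subset B(O,\e)$. The key step -- and the one I expect to be the main obstacle -- is that $I^\e$ stays inside a \emph{fixed} compact interval. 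Indeed, if not, say $\inf I^\e\to+\infty$ along the subsequence; then $c^\e$ maps $\,]-\infty,\inf I^\e]\,$ onto one of the two infinite \emph{straight} rays of $\psi^\e(\M^\e)$, whose endpoint $c^\e(\inf I^\e)$ tends to $O$. On the one hand the arclength $\int_0^{\inf I^\e}|(c^\e)'|$ of $c^\e$ from the fixed parameter $0$ to $\inf I^\e$ is $\gtrsim m\,\inf I^\e\to\infty$ (using $|(c^\e)'|\to|\gamma'|\geq m$ uniformly on compacta; a routine diagonal argument makes this precise); on the other hand, lying on a straight ray, it equals the Euclidean distance $|c^\e(0)-c^\e(\inf I^\e)|\to|\gamma(0)-O|<\infty$ -- a contradiction (the case $\sup I^\e\to-\infty$ is symmetric). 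Hence $I^\e\subset[-M,M]$ for a fixed $M$; since the arclength of $c^\e$ over $I^\e$ equals $\e\cdot\mathrm{length}(S|_{\mathrm{spiral}})\to 0$ while dominating $m'|I^\e|$ on $[-M,M]$, the interval $I^\e$ contracts to a single point $t_*\in[-M,M]$.

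To close the argument: as $c^\e$ runs over $I^\e$ it traverses the dilated spiral, whose unit tangent -- unaffected by dilation and by reparametrisation -- sweeps out all of $\mathbb{S}^1$; pick $s^\e\in I^\e$ with $(c^\e)'(s^\e)/|(c^\e)'(s^\e)|=-\gamma'(t_*)/|\gamma'(t_*)|$. Since $s^\e\to t_*$, $C^1$-convergence together with continuity and non-vanishing of $\gamma'$ forces $(c^\e)'(s^\e)\to\gamma'(t_*)$ and hence $(c^\e)'(s^\e)/|(c^\e)'(s^\e)|\to\gamma'(t_*)/|\gamma'(t_*)|$. Comparing, $\gamma'(t_*)=-\gamma'(t_*)$, so $\gamma'(t_*)=0$, contradicting that $\gamma$ is an immersion. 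This proves the Lemma.

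The soft parts are the uniform $L^1$-bound (immediate from scaling invariance) and the final ``turning'' contradiction; the real difficulty is confining $I^\e$ to a bounded region of parameter space -- that is, ruling out that a cleverly chosen $\sigma^\e$ pushes the collapsing spiral off to infinity, which would let $c^\e$ converge, e.g., to a straight line. The device that prevents this is precisely the pair of genuinely \emph{infinite} straight rays built into each $\M^\e$: any reparametrisation that is to converge in $C^1$ must traverse them at speed bounded above and below, which pins the location of the spiral to a bounded parameter window. A secondary point requiring care is the repeated use of the hypothesis that the putative limit $\gamma$ is an immersion (for the lower bound on $|\gamma'|$ and for $(c^\e)'(s^\e)\to\gamma'(t_*)$).
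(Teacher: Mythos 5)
Your construction is a genuinely different route from the paper's: the paper winds the tangent $10^m$ full turns around $\mathbb{S}^1$ and then unwinds it (via the antisymmetric kernel $K_\e=J_\e(\cdot+\e)-J_\e(\cdot-\e)$), and derives the contradiction from the topological degrees $\pm 10^m$ of the two halves versus the zero degree of the pointwise limit; you instead rescale a single fixed spiral of total turning $\tfrac{5\pi}{2}$, get the uniform $L^1$ bound from scale invariance of $\int|\kappa|\,\dd s$ (the same mechanism the paper invokes, but implemented more directly), and derive the contradiction from the corner in the limit set together with the tangent sweeping out all of $\mathbb{S}^1$ on a collapsing parameter interval. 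The final ``turning'' step and the collapse of $I^\e$ to a point \emph{given} $I^\e\subset[-M,M]$ are both correct. (A minor point: ``the inclusion'' presupposes the image is embedded, which a logarithmic spiral's terminal tangent rays will typically violate; take $\M^\e=\R$ abstractly and $\psi^\e$ an immersion, or arrange the rays to miss the spiral, as the paper does when it perturbs to a loop-free image.)

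The genuine gap is exactly where you predicted it: the confinement of $I^\e$. Your argument needs $\int_0^{\inf I^\e}|(c^\e)'|\to\infty$, and you get this from ``$|(c^\e)'|\to|\gamma'|\geq m$ uniformly on compacta.'' But the lower bound $m$ on a compact set degenerates as the set grows: an immersion of $\R$ need not satisfy $\inf_\R|\gamma'|>0$, and it may have \emph{finite} length on a half-line. No diagonal argument repairs this, because the step is actually false for your construction under $C^1_{\rm loc}$ convergence. Explicitly, write the first ray as $\{P^\e+su: s\geq 0\}$ with $P^\e\to O$, put $a^\e:=\inf I^\e\to\infty$, and choose $\sigma^\e$ so that $c^\e(t)=P^\e+(e^{-t}-e^{-a^\e})u$ for $t\leq a^\e$, with the spiral and the second ray traversed on $[a^\e,\infty[$. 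Then on every fixed compact set $c^\e$ eventually coincides (up to an $e^{-a^\e}$ translation) with $\gamma(t)=O+e^{-t}u$, so $c^\e\to\gamma$ in $C^1_{\rm loc}$, and $\gamma$ \emph{is} an immersion of $\R$: the spiral and the corner have escaped to parameter infinity. So as written your proof does not close, and indeed the statement itself requires either a stronger notion of convergence than $C^1_{\rm loc}$ on the non-compact domain, or an additional requirement on the limit (properness/completeness, or that its image exhaust the local Hausdorff limit of the images). You should be aware that the paper's own degree argument is exposed to the very same reparametrisation (the winding takes place on $]-\infty,(\sigma^\e)^{-1}(0)]$, and $(\sigma^\e)^{-1}(0)$ can be sent to $+\infty$), and the paper does not address it; but since your write-up explicitly attempts to rule this out and the attempt fails, the burden is on you to either strengthen the convergence hypothesis, pass to the localised/compact version of the construction, or find a confinement argument that does not rely on a global lower bound for $|\gamma'|$.
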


The extrinsic curvature of a planar curve is the mean curvature. Recall that the mean curvature is defined in arbitrary dimensions as the trace of the second fundamental form. In the case $d=1$ we may still denote the extrinsic curvature by $\two$. 

\begin{proof}
	Let $J \in C^\infty_c(\R)$ be a standard symmetric mollifier; {\it e.g.}, 
\begin{equation}
J(s) := \Lambda\exp\bigg\{\frac{1}{s^2-1}\bigg\} \1_{\{|s|<1\}},
\end{equation}	
where the universal constant $\Lambda>0$ is chosen such that $\int_\R J(s)\,\dd s =1$. As usual $J_\e(s):=\e^{-1}J(s/\e)$ for $\e>0$; then $\|J_\e\|_{L^1(\R)}=1$ for every $\e>0$. In addition, define the kernel
\begin{equation}\label{K e}
K_\e(x) := J_\e (x+\e) - J_\e (x-\e).
\end{equation}
It satisfies $\|K_\e\|_{L^1(\R)}=2$, $K_\e \in C^\infty_c(\R)$ and ${\rm spt}(K_\e)=[-2\e,2\e]$; in particular, it is smooth at $0$.

Now, define an angle function
\begin{equation}\label{theta, d=1}
\theta^\e(x) := 10^m\cdot 2\pi\int_{-\infty}^x K_\e(s)\,\dd s,
\end{equation} 
where $m \in \mathbb{Z}_+$ is to be determined. Then, choose the Gauss map $\n^\e \in C^\infty(\R;\mathbb{S}^1)$ by
\begin{equation}\label{n, d=1}
\n^\e(x) := \begin{bmatrix}
\cos \theta^\e(x)\\
\sin \theta^\e(x)
\end{bmatrix}\qquad \text{ for each } x \in \R.
\end{equation}
The extrinsic curvature $\two^\e$ equals to the negative of the gradient of the Gauss map; hence
\begin{align}\label{II for d=1}
|\two^\e(x)| &= \sqrt{\big|\big(-\sin\theta^\e(x)\big)(\theta^\e)'(x)\big|^2 + \big|\big(\cos\theta^\e(x)\big)(\theta^\e)'(x)\big|^2} \nonumber\\
&= |(\theta^\e)'(x)| = (2\pi \cdot 10^m) K_\e(x).
\end{align}
Thus, the $L^1$ norm of  $\{\two^\e\}$ is uniformly bounded by $4\pi \cdot 10^m$.

Let $\psi^\e$ be a smooth immersion that realises the Gauss map $\n^\e$ whose image is the unit circle $\mathbb{S}^1$ in $\R^2$. For each $\eta>0$, we may easily modify $\psi^\e$ to $\tilde\psi^\e$ such that $|\tilde\psi^\e(x)|$ is decreasing on $]-\infty, 0]$ and increasing on $[0,\infty[$, the image of $\tilde\psi^\e$ in $\R^2$ is homeomorphic to $\R^1$, and that
\begin{equation}\label{C 100}
\|\psi^\e-\tilde\psi^\e\|_{C^{100}(\R)} < \eta.
\end{equation}
Indeed, notice that the image of $\psi^\e\big|{]-\infty,0]}$ covers $\mathbb{S}^1$ for $10^m$ times in the positive orientation, and the image of $\psi^\e\big|{[0,\infty[}$ covers $\mathbb{S}^1$ for $10^m$ times in the negative orientation. We then choose the perturbed map $\tilde\psi^\e$ such that 
\begin{itemize}
\item
As $x$ goes from $-\infty$ to $0$, $\tilde\psi^\e$ wraps around the origin in a helical trajectory for $10^m$ times. Moreover, in each round $|\tilde\psi^\e|$ decreases monotonically by  $\sim 10^{-m}$;
\item
As $x$ increases from $0$ to $\infty$, $\tilde\psi^\e$ ``unwraps'' around the origin along a helix for $10^m$ times, in each round $|\tilde\psi^\e|$ increases monotonically by $\sim 10^{-m}$;
\item
For $x \in ]-\infty, -2\e] \sqcup [2\e, + \infty[$, the image of $\tilde{\psi}^\e$ consists of straight line segments (``long flat tails''); hence $\n^\e$ stays constant on each component of $]-\infty, -2\e] \sqcup [2\e, + \infty[$;
\item
Finally,  the image $\tilde\psi^\e(\R)$ is $C^\infty$ and homeomorphic to $\R^1$.
\end{itemize}
In view of the above properties, one can take $m=m(\eta)\in\mathbb{Z}_+$ sufficiently large to verify \eqref{C 100}. Let us pick $\eta = \frac{1}{100}$, so $m$ is a universal constant fixed once and for all. Without loss of generality, from now on we may assume $\psi^\e = \tilde\psi^\e$. The point is to ensure that the image of $\psi^\e$ in $\R^2$ is free of loops and ``concentrates'' near the origin $0 \in \R^2$, with Gauss map and second fundamental form arbitrarily staying close to those constructed in \eqref{n, d=1} and \eqref{II for d=1}, respectively.

To conclude the proof, let us define $\M^\e$ as the  homeomorphic copy of $\R^1$ equipped with the pullback metric $(\psi^\e)^\# \delta_{ij}$, where $\delta_{ij}$ is the Euclidean metric on the ambient space $\R^2$. It remains to show that the $C^1$-limit (modulo ${\rm Diff}(\R^1)$) of $\psi^\e$ as $\e\map 0^+$ cannot be an immersion. Indeed, note that the topological degree satisfies
\begin{equation}\label{deg}
\deg \Big(\psi^\e\big|]-\infty,0]\Big) = 10^m,\qquad \deg \Big(\psi^\e\big|[0,\infty[\Big) = -10^m.
\end{equation}
These identities are independent of $\e$. Hence, if $\bar{\psi}$ were a limiting immersion, \eqref{deg} would have been preserved. However, $K_\e \weakstar \delta_0 - \delta_0 =0$ as measures, so \eqref{theta, d=1}\eqref{n, d=1}\eqref{II for d=1} imply that any pointwise subsequential limit of $\psi^\e$ has zero topological degree. Hence we get the contradiction and the proof is complete.   \end{proof}

Three remarks are in order:

{\bf 1.} From \eqref{II for d=1} one may infer that
\begin{equation*}
\|\two^\e\|_{L^\infty(\M^\e)} = \frac{2\pi \cdot 10^m \cdot \Lambda}{e\e} + \eta \longrightarrow \infty \qquad \text{ as } \e \map 0^+.
\end{equation*}

{\bf 2.} The construction in Lemma \ref{lemma: d=1} can be localised near $0$. We can restrict $\M^\e$ to curves of finite $\mathcal{H}^1$ measure by removing the long tails. This recovers the volume bounds in \cite{la,br} ($\S 1$). 

{\bf 3.} We can construct $\phi^\e$ whose limit blows up at a countable discrete set $\{x_n\}$ by taking
$$
\tilde\theta^\e (x) := \sum_{n=1}^\infty 2^{-n} \1_{B(x_n, R_n)} (x) \theta^\e(x)
$$
in place of $\theta^\e(x)$, where $\{B(x_n, R_n)\}$ are disjoint for all $n$. Geometrically, the immersed images corresponding to $\tilde{\theta}^\e$ are smooth curves that spiral towards the centres $x_n$ when $x<x_n$, and then spiral away from $x_n$ when $x>x_n$. Near $x_n$ the rate of motion blows up in $L^\infty$ as $\e\map 0^+$; nevertheless, its $L^1$ norm is constant.

Now let us generalise the above construction to $d$-dimensions:
\begin{theorem}\label{thm: d dimensions}
Let $d \geq 1$ be an integer. There exist a family of smooth manifolds $\{\M^\e\}$ each homeomorphic to $\R^d$, and a family of immersions $\psi^\e : \M^\e \map \R^{d+1}$ as smooth hypersurfaces, such that the second fundamental forms $\{\two^\e\}$ associated to $\{\psi^\e\}$ are uniformly bounded in $L^d$, but $\{\psi^\e \circ \sigma^\e\}$ does not converge in $C^1$-topology to any immersion of $\R^d$ for arbitrary $\{\sigma^\e\} \subset {\rm Diff}(\R^d)$. 
\end{theorem}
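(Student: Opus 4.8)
The plan is to lift the one-dimensional spiralling construction of Lemma \ref{lemma: d=1} to $d$ dimensions by building $\psi^\e$ so that, along one distinguished coordinate direction, it mimics the spiralling curve, while the remaining $d-1$ directions are simply ``flat'' and contribute nothing to the curvature. Concretely, I would take $\M^\e \cong \R^d$ with coordinates $(x, y) \in \R \times \R^{d-1}$ and define the immersion as a graph-type map
\begin{equation*}
\psi^\e(x,y) = \big(\gamma^\e(x),\, y\big) \in \R^2 \times \R^{d-1} = \R^{d+1},
\end{equation*}
where $\gamma^\e : \R \map \R^2$ is the planar curve constructed in Lemma \ref{lemma: d=1} (the perturbed $\tilde\psi^\e$, already equipped with long flat tails and concentrating near the origin). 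Since the $y$-directions are totally geodesic for this product-type immersion, the second fundamental form $\two^\e$ of $\psi^\e$ at $(x,y)$ is supported on the $x\p_x$-block and its only nonzero component is exactly the curvature $\kappa^\e(x)$ of $\gamma^\e$; thus $|\two^\e(x,y)| = |\two^\e_{d=1}(x)| = (2\pi \cdot 10^m) K_\e(x)$ from \eqref{II for d=1}, independent of $y$.

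The next step is the $L^d$ bound. Because the construction must be localised (the $y$-slab $\R^{d-1}$ is noncompact, as is the $x$-tail), I would restrict to $y$ in a fixed bounded box $Q \subset \R^{d-1}$ — or, as in Remark {\bf 2} after Lemma \ref{lemma: d=1}, truncate the flat tails — and close up $\M^\e$ smoothly outside a compact set so that it remains homeomorphic to $\R^d$ with $\two^\e$ still uniformly bounded (the closing-up region can be taken with fixed, $\e$-independent geometry). Then
\begin{equation*}
\|\two^\e\|_{L^d(\M^\e)}^d = \int_{Q} \int_\R \big|(2\pi \cdot 10^m) K_\e(x)\big|^d \,\dd x\, \dd y + O(1) = |Q|\, (2\pi\cdot 10^m)^d \|K_\e\|_{L^d(\R)}^d + O(1).
\end{equation*}
Here the scaling invariance of $\|\two\|_{L^d}$ in $d$ dimensions, emphasised in $\S 1$, is what makes this work: $\|K_\e\|_{L^d(\R)}^d = \e^{1-d}\|K_1\|_{L^d}^d \cdot \e^{?}$ — more carefully, $K_\e(x) = \e^{-1}K_1(x/\e)$ up to the translation, so $\|K_\e\|_{L^d}^d = \e^{-d}\cdot \e \,\|K_1\|_{L^d}^d = \e^{1-d}\|K_1\|_{L^d}^d$, which is \emph{not} bounded for $d \ge 2$. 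This is the main obstacle, and it is precisely why the mollifier scaling used for $d=1$ must be modified: one cannot take the transition layer to have width $\e$; instead I would let the layer have width $\delta = \delta(\e)$ chosen so that $\delta^{1-d} \cdot (\text{height})^d$ stays bounded, while simultaneously forcing the \emph{total turning} (the relevant topological quantity) to persist. The correct device is to spread the $N = N(\e)$ turns over a region whose $d$-dimensional ``thin-layer'' contribution $\sim N^d \delta^{1-d}$ is $O(1)$: this forces $N \sim \delta^{(d-1)/d} \to \infty$ only if $\delta \to \infty$, which is incompatible with concentration — so instead one keeps $N$ \emph{fixed} (the $10^m$ of Lemma \ref{lemma: d=1} already works with $m$ a universal constant) and concentrates the fixed number of turns into a shrinking $x$-interval of width $\e$, accepting that $\|K_\e\|_{L^d} \to \infty$ is avoided only because in $d=1$ we had $\|K_\e\|_{L^1} = 2$ constant. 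For $d \ge 2$ the resolution is to \emph{rescale the ambient picture}: replace $\psi^\e$ by $\lambda_\e \psi^\e$ (a blow-down by $\lambda_\e \to 0$), under which $\|\two\|_{L^d(\M)}$ is invariant, so that the curvature mass contributed by the spiral stays at the fixed value $(4\pi\cdot 10^m)^d |Q|$ while the whole spiralling region shrinks to the point $O$. Then the flat tails, now also scaled, are joined to a fixed compact cap.

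The final step is the topological obstruction, carried over verbatim from the $d=1$ argument. The Gauss map $\n^\e : \M^\e \map \sd$ restricted to the spiralling region still realises a degree equal to $\pm 10^m$ on the two halves $\{x \le 0\}$ and $\{x \ge 0\}$ — concretely, $\n^\e(x,y) = \big(\cos\theta^\e(x), \sin\theta^\e(x), 0, \dots, 0\big)$ sweeps out a great circle $10^m$ times — and this degree, being an integer computed from $\int K_\e = 2$-type data, is $\e$-independent. If $\bar\psi = \lim \psi^\e \circ \sigma^\e$ in $C^1$ were an immersion, its Gauss map would be the $C^0$-limit of $\n^\e \circ \sigma^\e$ and would carry the same degrees; but $K_\e \weakstar \delta_0 - \delta_0 = 0$, so any pointwise subsequential limit of $\psi^\e$ collapses the spiralling region to $O$ and has zero turning, contradiction. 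I expect the genuinely delicate point to be arranging, uniformly in $\e$, the $C^\infty$ interpolation between the spiralling core, the flat tails, and the compact closing-up cap so that $\M^\e$ is honestly homeomorphic to $\R^d$ and $\|\two^\e\|_{L^d}$ picks up only an $O(1)$ error from the interpolation region — this is a routine but careful partition-of-unity / collar-neighbourhood construction, and it is where most of the writing will go.
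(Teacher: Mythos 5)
Your construction (a cylinder over the planar spiral of Lemma \ref{lemma: d=1}) is genuinely different from the paper's, which makes the Gauss map wind through \emph{all} $d$ spherical angles of $\sd$ simultaneously via $\theta_i^\e(x)=\Theta^\e(x_i)$ in \eqref{spherical coord}, so that $\n^\e$ covers $\sd$ with degree $\pm 10^m$ on the two halves of the surface. Two gaps in your route are substantive. First, the analytic fix. You correctly observe that for the cylinder $\|\two^\e\|_{L^d}^d\sim|Q|\,\|K_\e\|_{L^d(\R)}^d\sim\e^{1-d}\to\infty$ when $d\ge2$, but the remedy you propose --- replacing $\psi^\e$ by $\lambda_\e\psi^\e$ --- cannot work as stated: that rescaling leaves $\|\two\|_{L^d(\M)}$ \emph{exactly invariant}, so it cannot convert an unbounded sequence of norms into a bounded one, and your claim that the curvature mass ``stays at the fixed value $(4\pi\cdot 10^m)^d|Q|$'' contradicts your own computation two lines earlier. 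What would work (and is what the scale-invariance heuristic of $\S 1$ actually suggests) is to \emph{fix} the spiralling block once and for all (take $\e=1$, so it has some finite $L^d$-curvature) and let the ambient dilation $\lambda\to 0$ be the sole degenerating parameter; you circle this but never commit to it. Even then the closing-up is not routine: the boundary of the shrunken block contains $\gamma\times\p Q$, itself a spiralling $(d-1)$-dimensional set, and interpolating from it to a fixed flat cap with controlled curvature is exactly the step the paper spends most of its effort on (the neck $\G^\e$ and the half-planes $\Pi_\pm$ in \eqref{S glue}).

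Second, and more seriously, the topological obstruction evaporates for the cylinder. Your Gauss map $\n^\e(x,y)=\big(\cos\theta^\e(x),\sin\theta^\e(x),0,\dots,0\big)$ takes values in a single great circle of $\sd$, so for $d\ge 2$ its degree as a map into $\sd$ is identically zero; the ``$\pm 10^m$ degrees on the two halves'' are winding numbers of the one-dimensional factor, i.e.\ invariants of the product structure, and that structure is destroyed by an arbitrary $\sigma^\e\in{\rm Diff}(\R^d)$. You would need to exhibit a substitute invariant that persists under $C^1$-convergence modulo reparametrisation (for instance the turning of a distinguished family of slices, together with control of how $\sigma^\e$ moves them), and nothing in the proposal does this. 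That is precisely what the paper's genuinely $d$-dimensional construction is engineered to provide: there the two halves $\mathcal{S}^\e_\pm$ carry degree $\pm 10^m$ as multiples of the generator of $H^d(\sd)$, see \eqref{deg, nd}, and the final contradiction rests on that $d$-dimensional degree.
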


\begin{proof}
	Again the crucial point is to construct the Gauss map $\n^\e \in C^\infty(\R^d; \mathbb{S}^d)$. We make use of the spherical coordinates on $\sd$. For $x=(x_1,x_2,\ldots, x_d) \in \R^d$, one needs to specify the angle functions $\theta^\e_i: \R^d \map [0,2\pi[$ in the following:
	\begin{equation}\label{spherical coord}
	\n^\e(x) = \begin{bmatrix}
	\cos \theta_1^\e (x)\\
	\sin\theta_1^\e(x) \cos\theta_2^\e(x)\\
	\sin\theta_1^\e(x) \sin\theta_2^\e(x)\cos\theta_3^\e(x)\\
	\vdots\\
	\sin\theta_1^\e(x)\cdots\sin\theta_{d-1}^\e(x)\cos\theta_d^\e(x)\\
	\sin\theta_1^\e(x)\cdots\sin\theta_{d-1}^\e(x)\sin\theta_d^\e(x)
		\end{bmatrix}.
	\end{equation}
Throughout $\sd = \{z \in \R^{d+1}: |z|=1\}$ is the round sphere. 

Indeed, let us choose 
\begin{equation}\label{theta i def}
\theta^\e_i (x) \equiv \Theta^\e(x_i) := 10^m \cdot 2\pi \int_{-\infty}^{x_i} K_\e(s)\,\dd s,
\end{equation}
where the kernel $K_\e$ is defined as in \eqref{K e}, and $m \in \mathbb{Z}_+$ is a large universal constant to be fixed later. Each $\theta^\e_i$ is a function of $x_i$ only. One can easily compute all the entries in $-\two^\e = \na \n^\e$, which is a lower-triangular $d \times (d+1)$ matrix due to the embedding $\sd \emb \R^{d+1}$. The rows $\{\mathbf{r}_i\}_{i=1,2,\ldots, d}$ of $\{\na\n^\e\}$ are:
\begin{equation*}
\mathbf{r}_1 = \Big( -(\Theta^\e)'(x_1) \sin \Theta^\e(x_1), 0 , \cdots, 0 \Big),
\end{equation*}
\begin{equation*}
 \mathbf{r}_2 = \Big((\Theta^\e)'(x_1) \cos\Theta^\e(x_1) \cos \Theta^\e(x_2), -(\Theta^\e)'(x_2) \sin\Theta^\e(x_1) \sin\Theta^\e(x_2) ,0,\cdots,0\Big),
\end{equation*}
\begin{align*}
&\mathbf{r}_3 = \Big( (\Theta^\e)'(x_1) \sin\Theta^\e(x_1) \sin\Theta^\e(x_2) \cos\Theta^\e(x_3), (\te)'(x_2)\sin\te(x_1)\cos\te(x_2)\cos\te(x_3),\\
&\qquad\qquad -(\te)'(x_3) \sin\te(x_1)\sin\te(x_2)\sin\te(x_3), 0, \cdots, 0 \Big)
\end{align*}
so on and so forth, with the last two being
\begin{align*}
&\mathbf{r}_{d-1} = \Big( (\te)'(x_1)\cos\te(x_1)\sin\te(x_2)\cdots\sin\te(x_{d-1})\cos\te(x_d), \star, \cdots, \star,\nonumber\\
&\qquad\qquad (\te)'(x_{d-1})\sin\te(x_1)\sin\te(x_2)\cdots\cos\te(x_{d-1})\cos\te(x_d),\nonumber\\
&\qquad\qquad -(\te)'(x_d)\sin\te(x_1)\sin\te(x_2)\cdots\sin\te(x_{d-1})\sin\te(x_d)\Big)
\end{align*}
and
\begin{align*}
&\mathbf{r}_{d} =  \Big((\te)'(x_1)\cos\te(x_1)\sin\te(x_2)\cdots\sin\te(x_{d-1})\sin\te(x_d),\star, \cdots, \star,\\
&\qquad\qquad  (\te)'(x_{d-1}) \sin\te(x_1)\sin\te(x_2)\cdots\cos\te(x_{d-1})\sin\te(x_d),\\
&\qquad\qquad(\te)'(x_d)\sin\te(x_1)\sin\te(x_2)\cdots\sin\te(x_{d-1})\cos\te(x_d) \Big).
\end{align*}

A direct computation yields the Hilbert--Schmidt norm of the second fundamental form:
\begin{equation}
|\two^\e| = |\na\n^\e| = \Big|\Big((\te)'(x_1), \cdots, (\te)'(x_d)\Big)\Big|.
\end{equation}
Thus, in view of \eqref{theta i def} and Fubini's theorem, we have
\begin{equation}\label{two, d dim}
\|\two^\e\|_{L^d(\R^d)} = 10^m \cdot 2\pi\,\Big\|\underbrace{ K_\e \otimes \cdots\otimes K_\e}_{d \, \text{ times } } \Big\|_{L^d(\R^d)} = 10^m \cdot 2\pi\,\|K_\e\|_{L^1(\R^d)} = 10^m \cdot 4\pi.
\end{equation}

It now remains to choose a smooth immersion that realises   $\n^\e$ (approximately). The construction is similar to Lemma \ref{lemma: d=1} in the case of  $d=1$. First, take $\psi^\e$ whose Gauss map is $\n^\e$. Geometrically, $\psi^\e$ winds around $\sd$ --- accelerating on the first half and decelerating on the second half of the trajectory --- with respect to a given orientation for $10^m$ ``cycles'', and then undoes the winding by turning symmetrically in the opposite orientation. In the above, by ``cycle'' we mean a generator of the cohomology group $H^d(\sd) \cong \R$.

In what follows we shall describe how to modify the above construction to obtain a counterexample with $\M^\e$ homeomorphic to $\R^d$. We shall construct $\tilde{\psi}^\e$, modified versions of $\psi^\e$, such that for each $\e>0$ the image of $\tilde{\psi}^\e$ in $\R^{d+1}$ is a smooth homeomorphic copy of $\R^d$. In addition, each such image has flat ends outside $B(0,2)$ and has $d$ independent angular variables in the spherical coordinates, {\it i.e.}, the Gauss map still takes the form \eqref{spherical coord} with $\theta_i^\e(x) \equiv \Theta^\e(x_i)$. Moreover, such $\tilde\psi^\e$ differs from $\psi^\e$ only by an arbitrarily small error in the $C^{100}$-topology.

To obtain the modified immersions $\tilde{\psi}^\e$, let us begin with $\mathcal{S}^\e \equiv $ image of $\psi^\e$. For each $\e>0$, schematically we can write $\mathcal{S}^\e \sim +10^m \sd- 10^m \sd$, with $\pm$ denoting the orientation. For $x \notin B(0,2)$ we have $\psi^\e \equiv 0$, in view of \eqref{theta i def} and the choice of $K_\e$. Now, for some small number $0<\eta \ll 10^{-m}$ to be specified, we shall modify $\mathcal{S}^\e$ as follows.

First of all, for the concentric spheres $\sd$ and $(1-\eta)\sd:=\p B(0, 1-\eta)$ in $\R^{d+1}$, we can smoothly ``interpolate'' between them by finding a hypersurface $\mathcal{S}_1^\e$ lying in the annulus formed by the two spheres, such that the tangent spaces of $\mathcal{S}_1^\e$ and $\mathcal{S}^\e$ coincide at the ``north poles'' $\mathbf{e}$ and $(1-\eta)\mathbf{e}$, and that all the angular variables $\theta_1,\ldots\theta_d$ on $\mathcal{S}_1^\e$ change by $2\pi$ at the same constant speed. Here and hereafter $\mathbf{e}:=(0,\ldots,0,1)$. In the same way, we construct $\mathcal{S}^\e_2$ nested between $(1-\eta)\sd$ and $(1-2\eta)\sd$, so that  $\mathcal{S}^\e_1$ and  $\mathcal{S}^\e_2$ can be glued smoothly at $(1-\eta)\mathbf{e}$, and their natural orientations are the same. Let us repeat this process to get $S^\e_j$ for $j\in\{1,2,\ldots, 10^m\}$, and glue $$\bigcup_{j=1}^{10^m}\mathcal{S}^\e_j :=\mathcal{S}^\e_+$$ to form a smooth ``spiral'' starting from $\mathbf{e}$ and ending at $(1-10^m\eta)\mathbf{e}$. 

To proceed, denote by $\mathcal{S}^\e_-$ the hypersurface obtained via shifting $\mathcal{S}^\e_+$ to its right-hand side by $\eta\slash 2$. This is well-defined as $\mathcal{S}^\e_+$ an oriented hypersurface in $\R^{d+1}$, and we have $\mathcal{S}^\e_+ \cap \mathcal{S}^\e_- = \emptyset$. Let us endow $\mathcal{S}^\e_-$ with the  orientation opposite to that of $\mathcal{S}^\e_+$. Furthermore, we may find a short neck $\G^\e$, such that $\G^\e$ is a smooth hypersurface disjoint with $\mathcal{S}^\e_+$, $\mathcal{S}^\e_-$, and that $\mathcal{S}^\e_+ \cup \G^\e \cup \mathcal{S}^\e_-$ can be glued together smoothly. Additionally let us require that on $\G^\e$  each of the angular variables $\theta_1, \ldots,\theta_d$ does not vary more than $\pi\slash 1000$. Also, the area of $\G^\e$ is entailed to shrink to zero as $\e \map 0^+$. Finally, at the points $\mathbf{e}$ and $(1+\eta\slash 2)\mathbf{e}$, we glue to $\mathcal{S}^\e_+ \cup \G^\e \cup \mathcal{S}^\e_-$ the Euclidean half-planes $\Pi_+$ and $\Pi_-$, respectively, such that $\Pi_\pm$ are isomorphic copies  $\R^d_+$ and that
\begin{equation}\label{S glue}
\mathcal{S}^\e := \Pi_+ \cup S^\e_+ \cup \G^\e \cup \mathcal{S}^\e_- \cup \Pi_- 
\end{equation}
is a smooth hypersurface homeomorphic to $\R^{d} \subset \R^{d+1}$.

We conclude the construction by setting $$\M^\e := \big(\mathcal{S}^\e, (\tilde\psi^\e)^\# \delta_{ij}\big),$$ where $\delta_{ij}$ is the Euclidean metric on $\R^{d+1}$. Note that for each $\e>0$ the manifold $\M^\e$ is homeomorphic to $\R^d$. The origin $0 \in \R^d$ corresponds to the point that lies in the neck $\G^\e$ for all $\e>0$. One may think of each of the variables $x_i$, $i\in\{1,2,\ldots,d\}$ in \eqref{spherical coord} and the ensuing arguments as the ``time'' variable analogous to $x$ in Lemma \ref{lemma: d=1}, and view $x=(x_1,\ldots,x_d)$ as being restricted to the diagonal. Thus, each angular variable $\theta^\e_i$ behaves in the same way as $\theta^\e$ in Lemma \ref{lemma: d=1}, and these variables are ``synchronised''.

In the above construction of $\M^\e$, we find that the Gauss map $\n^\e$ of the immersion (in fact, embedding) $\tilde\psi^\e$ still takes the form of \eqref{spherical coord} with $\theta_i^\e(x) \equiv \tilde\te(x_i)$. Moreover, each step of the construction can be performed with sufficiently small perturbations in any norm, say $C^{100}$; thus 
\begin{equation}\label{close, C100}
\|\psi^\e - \tilde\psi^\e\|_{C^{100}} \leq C\eta
\end{equation}
for a universal constant $C$. So the second fundamental forms $\tilde\two^\e$ for $\tilde{\psi}^\e$ are also uniformly close to $\two^\e$, say in the $C^{97}$-topology. In particular, as $\tilde\two^\e$ and $\two^\e$ are both compactly supported on $B(0,2)$, we deduce from \eqref{two, d dim} that $$\Big|\|\tilde{\two}^\e\|_{L^d(\R^d)} - 10^m\cdot 4\pi\Big| \leq C\eta,$$
where $C$ is a universal constant. Hence $\|\tilde{\two}^\e\|_{L^d(\R^d)}$ is uniformly bounded.

Finally, let us consider the  topological degree for $\tilde\psi^\e$. By construction, $\Pi_+, \G^\e$ and $\Pi_-$ do not contribute to the degree. Also, the following holds independently of $\e$:
\begin{equation}\label{deg, nd}
\deg \big(\tilde{\psi}^\e | \mathcal{S}^\e_\pm\big) = \pm 10^m.
\end{equation}
Indeed, thanks to the definition of $\psi^\e$ and \eqref{close, C100}, the images under 
$\tilde{\psi}^\e$ of $\mathcal{S}^\e_\pm$ are $10^m$ times the non-trivial generator of $H^d(\sd)$ with the positive and negative orientations, respectively. Now suppose $\bar{\psi}$ were a limit $\{\tilde{\psi}^\e\}$ as an immersion of hypersurface, then \eqref{deg, nd} would have been preserved. Whereas, by \eqref{close, C100}\eqref{theta i def} and in light of the construction of the kernel $K_\e$ and the neck $\G^\e$, any pointwise subsequential limit of $\tilde\psi^\e$ has zero degree. So, in light of the diffeomorphism-invariant property of the topological degree, $\bar{\psi}$ cannot be an immersion modulo the action of ${\rm Diff}(\R^n)$. This completes the proof.  \end{proof}

Similar to the remarks ensuing the proof of Lemma \ref{lemma: d=1}, this counterexample can be localised, and we can get a family of immersions of $\R^d$  blowing up at an infinite discrete set.

\section{Local H\"{o}lder Regularity}

Consider the moduli space
\begin{align}
\F(\delta, d) &:= \bigg\{ f \in W^{2,d}\cap C^\infty(\M; \R^{d+1}) : \text{$f$ is an immersion}, \text{ $\M$ is an $d$-dimensional hypersurface}, \nonumber\\
&\qquad \text{$\M\cup\{\infty\}$ is smooth in $\mathbb{S}^{d+1}$}, \|\n\|_{{\rm BMO}(\M)} \leq \delta, \, \text{$f(\M)$ contains a fixed point} \bigg\}.
\end{align}
Heuristically, we show the following: if the Gauss maps of a family of smooth  homeomorphic $\R^d$ have uniformly small oscillations at all scales, then ``a little''  regularity persists in the limit. 

To state the result rigorously, we need the following
\begin{definition}
A set $\Omega \subset \R^d$ is a H\"{o}lder graph system if it can be locally represented by graphs of $C^{0,\gamma}$-functions for some $\gamma \in ]0,1]$.
\end{definition}
The notion of ``graph systems'' plays an essential role in the works \cite{la, br}. Note that we do not require further geometric information for a H\"{o}lder graph system, {\it e.g.}, whether or not it represents a topological manifold or orbifold. 

Our main result of this section can now be stated as follows:
\begin{theorem}\label{thm: holder}
There exists a small constant $\delta_0>0$ depending only on the dimension $d$, such that for any $\delta \in [0,\delta_0]$ and any family of immersions $\{\psi^\e\} \subset \F(d,\delta)$, we can find $\{\sigma^\e\}\subset {\rm Diff}(\R^d)$ such that, after passing to subsequences,  $\{\psi^\e \circ \sigma^\e\}$ converges to a H\"{o}lder graph system.
\end{theorem}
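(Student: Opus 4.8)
The plan is to combine the uniform $L^d$-bound on the second fundamental forms with the smallness of $\|\n\|_{\mathrm{BMO}}$ to bring in S. Semmes' theory of chord-arc surfaces with small constant, and then to run the compactness machinery of Langer and Breuning on the resulting graph representations. The starting observation is that for a hypersurface, the tangent plane rotation is controlled by the oscillation of the Gauss map, so $\|\n\|_{\mathrm{BMO}(\M)} \leq \delta$ is precisely the kind of ``small-constant'' hypothesis appearing in \cite{se1,se2,se3}: it says that $\psi^\e(\M)$, viewed as a $d$-dimensional subset of $\R^{d+1}$, is locally (after the fixed-point normalisation) a chord-arc graph with small constant. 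I would first make this precise by invoking the main structural theorem of \cite{se1,se2,se3}: there is a $\delta_0 = \delta_0(d)$ such that if $\delta \leq \delta_0$, then for each $\psi^\e$ and each point $p \in \psi^\e(\M)$ there is a scale-invariant radius $r>0$ and a rotation of coordinates in which $\psi^\e(\M)\cap B(p,r)$ is the graph of a function $g^\e_p : \R^d \supset U \to \R$ with $\mathrm{Lip}(g^\e_p) \leq C\delta$ and, more quantitatively, with a uniform modulus of continuity on the gradient coming from the $L^d$-bound on $\two$.

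Next I would extract the quantitative H\"older estimate. On each graph patch, $\na g^\e_p$ is, up to lower order terms, the local coordinate expression of (a chart of) the Gauss map; and $|\na \n^\e|$ is comparable to $|\two^\e|$ (this is exactly the computation \eqref{II for d=1}, \eqref{two, d dim} used in $\S 2$). Hence $\na g^\e_p \in W^{1,d}$ with norm controlled by $E := \sup_\e \|\two^\e\|_{L^d}$. Now $W^{1,d}$ does not embed into $C^{0,\alpha}$ — this is exactly the endpoint obstruction behind $\S 2$ — so I cannot get H\"older control of $\na g^\e_p$ directly. Instead I would use that $\na g^\e_p \in W^{1,d}$ implies $\na g^\e_p \in \mathrm{VMO}$ with a uniform modulus, \emph{together with} the a priori $L^\infty$-smallness $\|\na g^\e_p\|_{L^\infty}\leq C\delta$; integrating, $g^\e_p$ itself lies in $C^{0,\gamma}$ for every $\gamma \in ]0,1[$ with a bound depending only on $d$, $E$, $\delta$, via the estimate $\mathrm{osc}_{B_\rho} g^\e_p \lesssim \rho\,\|\na g^\e_p\|_{L^\infty} $ refined by a Morrey–Campanato-type iteration that upgrades the $L^d$-integrability of $\na g^\e_p$ into a $\rho^{\gamma}$ decay of $\dashint_{B_\rho}|g^\e_p - (g^\e_p)_{B_\rho}|$. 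The key point is that all constants are uniform in $\e$ and that the patch radii $r$ can be chosen uniformly, because the chord-arc constant is uniformly small.

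With uniform $C^{0,\gamma}$ bounds and uniform patch sizes in hand, the conclusion follows by a standard diagonal/Arzel\`a--Ascoli argument, exactly in the spirit of \cite{la, br}: fix a countable dense family of base points, reparametrise each $\psi^\e$ by diffeomorphisms $\sigma^\e$ so that on each patch it is written as a graph over a \emph{fixed} domain, apply Arzel\`a--Ascoli on each patch (the compact embedding $C^{0,\gamma}\hookrightarrow C^{0,\gamma'}$ for $\gamma'<\gamma$), diagonalise over the countable cover, and check that the limiting graph functions agree on overlaps, so they glue to a well-defined set $\Omega$ that is, by construction, locally a graph of a $C^{0,\gamma'}$ function — i.e. a H\"older graph system. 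The fixed-point condition in $\F(\delta,d)$ prevents the patches from escaping to infinity and pins down the limit. I expect the main obstacle to be the second step: squeezing a genuine uniform H\"older modulus out of a mere $W^{1,d}$ (equivalently $L^d$ on $\two$) bound. This is false for $\na g^\e_p$ alone, and the whole argument hinges on exploiting the \emph{a priori} Lipschitz smallness from the BMO hypothesis in tandem with the $L^d$ bound — essentially a borderline Morrey–Campanato estimate — and on verifying carefully that Semmes' theory indeed delivers the uniform patch radii and Lipschitz-small graph representations from the smallness of $\nbmo$; getting these two inputs to interlock with uniform constants is where the real work lies.
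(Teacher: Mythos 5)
Your outline correctly identifies Semmes' chord-arc theory as the engine and ends with the same Arzel\`a--Ascoli/diagonalisation step as the paper, but the central quantitative step --- the uniform $C^{0,\gamma}$ bound --- is not established, and the route you propose for it would fail. The first problem is that Semmes' results do \emph{not} say that $\psi^\e(\M)\cap B(p,r)$ is itself a Lipschitz graph with constant $C\delta$. Smallness of $\|\n\|_{{\rm BMO}(\M)}$ forces the surface to be close to a hyperplane at every point and scale only in an averaged sense (Reifenberg flatness); it does not rule out slow spiralling, and whether such surfaces even admit bi-\emph{Lipschitz} parametrisations is an open problem (see the paper's footnote citing Toro). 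What \cite{se2} actually supplies is (i) an auxiliary chord-arc surface $\M_t$, close to $\M$ at scale $t$, which \emph{is} locally a Lipschitz graph, and (ii) Theorem 4.1 of \cite{se2}: a homeomorphism $\tau:\M\to\M_t$ that is bi-H\"older with exponent $\gamma=1-C_2d\delta_0$ and norm controlled by $d,\delta_0,t$. The H\"older regularity of the limit comes entirely from this bi-H\"older parametrisation, i.e.\ from the BMO smallness alone; the $L^d$ bound on $\two$ plays no role in the estimate.

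The second problem is internal to your step two. If Semmes really gave $\|\na g^\e_p\|_{L^\infty}\leq C\delta$, then $g^\e_p$ would already be uniformly Lipschitz and your Morrey--Campanato iteration would be superfluous; without that $L^\infty$ bound (which, as above, is not available), a uniform $W^{1,d}$ bound on $\na g^\e_p$ gives no H\"older modulus whatsoever --- this is precisely the endpoint failure of the Sobolev--Morrey embedding that the counterexample of \S 2 exploits. So the ``borderline Morrey--Campanato estimate'' you flag as the crux cannot be made to work as stated; the missing ingredient is Semmes' bi-H\"older parametrisation theorem, which is a genuinely different (and much deeper) input than an embedding-type argument on graph functions.
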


The proof is based on the framework and results developed by S. Semmes (\cite{se1, se2, se3}) on the harmonic analysis on chord-arc surfaces with small constants:

\begin{definition}[See Main Theorem, p.200 of \cite{se1}]\label{def: chord-arc}
Let $\M$ be a hypersurface in $\R^{d+1}$. It is a chord-arc surface with small constant if $\gamma>0$ is small, or equivalently, $\eta>0$ is small.  $\gamma$ is defined to be the smallest number such that 
\begin{enumerate}
\item[$(\gamma 1)$]
The $BMO$-norm of the Gauss map $\n$ is no larger than $\gamma$;
\item[$(\gamma 2)$]
For each $x\in\M$, $R>0$ and $y \in B(x,R)$, there holds $|(x-y) \cdot \n_{x,R}| \leq \gamma R$.
\end{enumerate}
Here and hereafter $f_{x,R} := \dashint_{B(x,R)}f = {\rm Vol}^{-1}[B(x,R)] \int_{B(x,R)}f$ for each function $f$. On the other hand, $\eta>0$ is the smallest number such that 
\begin{enumerate}
\item[$(\eta 1)$]
For every $x\in\M$, $R>0$, 
\begin{equation*}
\bigg| \frac{{\rm Vol}\,\big(\M \cap B(x,R)\big)}{{\rm Vol}\,\big(B(0,1)\big) R^d} - 1\bigg| \leq \eta;
\end{equation*}
\item[$(\eta 2)$]
For any $x, y\in\M$, $d(x,y)\leq (1+\eta)|x-y|$. Here $d$ denotes the geodesic distance on $\M$. 
\end{enumerate}
\end{definition}

In fact, in the Main Theorem, p.200 of \cite{se1}, the equivalence of the two conditions in Definition \ref{def: chord-arc} is proved, together with yet another two equivalent conditions $(\alpha)$ and $(\beta)$ defined via Clifford--Cauchy integrals and Hardy spaces. Each of the numbers $\alpha,\beta,\gamma,\eta$ can be called ``the chord-arc constant''.

Our proof of Theorem \ref{thm: holder} relies crucially on three results in \cite{se1, se2, se3}. Let us first discuss these results instinctively and non-rigorously by emphasising their geometric meanings. The exact statements, quotations and explicit estimates will be presented in the proof. 

\begin{itemize}
\item
For $\M$ in the moduli space $\mathcal{F}(\delta,d)$, if $\delta$ is sufficiently small, then it is proved in \cite{se2, se3} that $\M$ is a chord-arc surface with small constant. In other words, when restricted $\mathcal{F}(\delta,d)$, $(\gamma 1)$ implies $(\gamma 2)$. In particular, for each $x\in \M$ and $R>0$, $B(x,R)\cap\M$ stays close to the hyperplane through $x$ normal to the averaged Gauss map $\n_{x,R}$. 
\item
A chord-arc surface $\M$ with small constant can be ``smoothed'' in a small neighbourhood around any $x \in \M$: there exists another chord-arc surface which is a Lipschitz graph, and which stays very close to $\M$. 
\item
A chord-arc surface $\M$ with small constant has a ``bi-H\"{o}lder'' parametrisation by $\R^d$.\footnote{It remains an open question if we change ``bi-H\"{o}lder'' to ``bi-Lipschitz''; see \cite{to} by T. Toro for discussions.}
\end{itemize}

One more issue before presenting the proof: we need
\begin{definition}
Let $(X,d)$ be a metric space. $\mathcal{N} \subset X$ is said to be a $R$-net if $X = \bigcup_{z \in \mathcal{N}} B(z,R)$.  Moreover, $\tilde{\mathcal{N}}$ is said to be a $\tilde{R}$-subnet of $\mathcal{N}$ if $X = \bigcup_{\tilde{z} \in \tilde{\mathcal{N}}} B(\tilde{z}, \tilde{R})$,  and if for each $\tilde{z} \in \tilde{\mathcal{N}}$ one can find some $z \in \mathcal{N}$ such that  $B(\tilde{z}, \tilde{R}) \subset B(z,R)$. 
\end{definition}
In the above $B(\bullet,\bullet)$ are the metric balls, and by an abuse of notations we also refer to $\{B(z,R):z\in\mathcal{N}\}$ as the $R$-net.

\begin{proof}[Proof of Theorem \ref{thm: holder}]
Assume $\M \in \mathcal{F}(\delta,d)$ with $\delta \leq \delta_0$ to be chosen. Fix any $t>0$, {\it e.g.} $t=10^{-5}$. By $\S 3$, \cite{se2} one can find another chord-arc surface $\M_t$ with the constant $\mu$ to be specified, such that $$0\leq\delta \leq \delta_0 \leq C(d)\delta_0 <\mu.$$

 Next, in view of Eq.\,(3.7) and Lemma 3.8 in \cite{se2}, $$\M_t \cap B\big(x, (2^{-1}+10^{-10})t\big)$$ is a Lipschitz graph with constant $\leq C_0\mu$ for each $x\in \M$, provided that $\mu=\mu(t,\delta_0)$ is chosen large enough. Here $C_0=C(d,\delta_0)$. Under the same condition, $\M_t$ can be taken sufficiently close to $\M$. More precisely, by  Lemma 3.8 in \cite{se2}, one may take
 \begin{equation*}
 {\rm dist}(\M_t, \M) \leq 10^{-10}t.
 \end{equation*}

Moreover, by Theorem 4.1 in \cite{se2}, there exists a homeomorphism $\tau: \M \map \M_t$ such that
	\begin{equation}\label{bi-holder estimate}
	\max\Big\{\|\tau\|_{C^{0,\gamma}(B(x,100 t)\cap\M)}, \|\tau^{-1}\|_{C^{0,\gamma}(B(x,100 t))\cap\M_t} \Big\}	\leq C_1 \qquad \text{ for all }x \in \M,
	\end{equation}
where $C_1=C(d, \delta_0, t)$ and the H\"{o}lder index is given by
\begin{equation}\label{gamma}
\gamma \equiv 1-C_2d\delta_0
\end{equation}
for a dimensional constant $C_2$ (denoted by $k$ in \cite{se2}). In fact, putting together Eqs.\,(1.3)(4.6)\footnote{Eq.\,(4,6) in \cite{se2} contains an index  $p$; for our purpose we can take it to be $(C_2\gamma)^{-1}$, by Theorem 4.1 in \cite{se2}}, Lemma 5.5 in \cite{se2} and that $0 \leq \delta \leq \delta_0$, we may explicitly select
\begin{equation}\label{C1}
C_1 = C_3^{C_2\delta_0} \bigg\{\frac{(100 t)^{C_2\delta_0}}{1-2 \cdot 10^d \delta_0}\bigg\}.
\end{equation}
Here $C_3=C_3(d)$ is a  dimensional constant. Notice that our estimates \eqref{C1}\eqref{bi-holder estimate} are uniform in $\delta$. We restrict to $\delta_0<(C_2d)^{-1}$ to ensure that $\gamma >0$ in \eqref{gamma}.

With the above explicit estimates at hand, we are ready to conclude the theorem. By considering a compact exhaustion $\{\M_k\} \nearrow \M$, one may take $\M$ to be a bounded domain in $\R^d$. 
Then we can take a $(50 t)$-net $\mathcal{N}$ of $\M$, whose cardinality is
$$
\mathcal{H}^0(\mathcal{N}) = C_4 t^{-d}
$$
for some geometric constant $C_4=C(d,\gamma)\equiv C(d,\delta_0)$. Restricted to each member in the $\mathcal{N}$, the hypersurface  $\M$ is $C^{0,\gamma}$-parametrised by $\M_t$,  a Lipschitz graph over $(2^{-1}+10^{-10})$-balls. Using the quantitative estimates in the preceding paragraphs, we can refine $\mathcal{N}$ to a subnet $\tilde{\mathcal{N}}$ with cardinality $C_5 t^{-d}$, where $C_5=C(d,\delta_0)$ again, such that in each $B \in \tilde{\mathcal{N}}$ the set $B \cap \M$ is parametrised by a $C^{0,\gamma}$-homeomorphism with the H\"{o}lder norm bounded by $C_6:=C_0\mu\cdot C_1$.

To complete the proof, let us choose $$\mu = 10 C(d)\delta_0.$$ By carefully tracing the dependence of the constants $C_1,\ldots, C_5$ in the above arguments, one concludes that $C_6=C(d,\delta_0,t)$. But $t=10^{-5}$ is fixed from the beginning of the proof, so $C_6$ depends only on the dimension $d$ and $\delta_0$, the upper bound for the $BMO$-norm of the Gauss map. Therefore, the assertion now follows from the Arzela--Ascoli theorem, {\it i.e.}, the compactness of $C^{0,\gamma} \emb C^{0,\gamma'}$ for $\gamma' \in ]0,\gamma[$. \end{proof}

\section{Three Further Questions}

{\bf 1.} 
Let the moduli space $\F(A,E,p)$ be as in $\S 1$. Is the subspace
\begin{equation*}
\F_{\rm isom}(A,E,p) := \Big\{\psi \in \F(A,E,p): \psi \text{ is an isometric immersion of a fixed manifold } \M\Big\}
\end{equation*}
compact in its natural topology? For the end-point case $p=2=d$ the answer is affirmative, in contrast to the unconstrained case for $\F(A,E,p)$. The authors of \cite{cl} proved this via establishing the weak continuity of the Gauss--Codazzi equations (the PDE system for the isometric immersion), with the help of a div-curl type lemma due to Conti--Dolzmann--M\"{u}ller in  \cite{cdm}. What about higher dimensions $d \geq 3$ (and co-dimensions greater than $1$)? That is, for a family of isometric immersions of some fixed $d$-dimensional manifold with uniformly bounded second fundamental forms in $L^d$, is the subsequential limit an isometric immersion?

\smallskip
{\bf 2.} Theorem \ref{thm: holder} leaves open the possibility that the limiting objects of $W^{2,d}$-bounded immersed hypersurfaces may be very irregular ({\it e.g.}, the nowhere differentiable Weierstrass function is $C^{0,\gamma}$, or other fractals), even if the  geometrical condition that the Gauss map is slowly oscillating is enforced. Can we find natural geometrical conditions on the moduli space of $d$-dimensional hypersurfaces with uniformly bounded second fundamental forms in $L^d$, which is sufficient to ensure higher regularities for the subsequential limits, {\it e.g.}, $BV$ or Lipschitz? This is related to the problem of finding good parametrisations of chord-arc surfaces; see the discussions by S. Semmes \cite{se2} and T. Toro \cite{to}.

\smallskip
{\bf 3.} Theorem \ref{thm: d dimensions} shows that space of smooth hypersurfaces in $\R^{d+1}$ with uniformly $L^d$-bounded second fundamental forms is non-compact modulo diffeomorphisms. Under what additional conditions can we retain compactness? For the simplest case, under what extra geometrical or analytical assumptions is the space of topological $\mathbb{S}^2$ immersed in $\R^3$ with $\|\two\|_{L^2} \leq E$ compact?

\bigskip
\noindent
{\bf Acknowledgement}.
This work has been done during the author's stay as a CRM--ISM postdoctoral fellow at Centre de Recherches Math\'{e}matiques, Universit\'{e} de Montr\'{e}al and Institut des Sciences Math\'{e}matiques. Siran Li would like to thank these institutions for their hospitality. The author is also indebted to Prof.\,Gui-Qiang G. Chen and Prof.\,Pengfei Guan for their continuous support and many insightful discussions on isometric immersions.

\end{document}